\newtheoremstyle{mythm}{6pt}{6pt}{\itshape}{}{\bfseries}{.}{ }{} 
\newtheoremstyle{mydef}{6pt}{6pt}{}{}{\bfseries}{.}{ }{}         
\newtheoremstyle{myrem}{6pt}{6pt}{}{}{\scshape}{.}{ }{}          
\theoremstyle{mydef}
\newtheorem{definition}{Definition}[section]
\theoremstyle{mythm}
\newtheorem{theorem}[definition]{Theorem}
\newtheorem{lemma}[definition]{Lemma}
\theoremstyle{myrem}
\newtheorem{remark}[definition]{Remark}
\renewenvironment{proof}{\par\medbreak\noindent\textit{Proof}:\hskip.5em\ignorespaces}{\hfill\qedsymbol\medbreak} 
\renewcommand{\qedsymbol}{\hfill\rule{2mm}{2mm}}                                                                  
\newcommand{\mfrac}[2]{\text{\footnotesize{$\dfrac{#1}{#2}$}}}
\newcommand{\lfrac}[2]{\text{\small{$\dfrac{#1}{#2}$}}}
\newcommand{\cV}{\mathcal{V}}
\newcommand{\R}{\mathbb{R}}
\newcommand{\Rtd}{{\R^{2d}}}
\newcommand{\Rt}{{\R^{2}}}
\newcommand{\So}{\boldsymbol S_0}
\newcommand{\LtR}{\mathbf{L}^2(\R)}
\begin{document}
\title[Rel.~Samp.~STFT TF-Loc.~Func.]{Relevant sampling of the short-time Fourier transform of time-frequency localized functions}

\author[G.~Velasco]{Gino Angelo Velasco}
\address{Institute of Mathematics, University of the Philippines, Diliman, Quezon City 1101, Philippines}
\email[Gino Angelo Velasco]{gamvelasco@math.upd.edu.ph}

\thanks{The author acknowledges the Office of the Chancellor of the University of the Philippines Diliman, through the Office of the Vice Chancellor for Research and Development, for funding support through the Ph.D.~Incentive Award.}

\keywords{STFT, local Gabor systems, relevant sampling, time-frequency localized functions}  

\begin{abstract}
We study the random sampling of the short-time Fourier transform of functions that are localized in a compact region in the time-frequency plane. We follow the approach introduced by Bass and Gr\"{o}chenig for band-limited functions, and show that with a high, controllable probability, a sufficiently dense set of local random samples from the region of concentration in the time-frequency plane yields a sampling inequality for the short-time Fourier transform of time-frequency localized functions on the region. 
\end{abstract}

\maketitle

\section{Introduction}

The short-time Fourier transform (STFT) is a standard tool used in the analysis and processing of signals. The STFT of a function or signal $f$ can be interpreted as the inner product of $f$ with a time-frequency shifted version of a single window function $\varphi$, and the reconstruction of $f$ from its STFT is possible via an inversion formula. However, this continuous representation is highly redundant, and to lessen the redundancy, a sampling of the STFT is done. Most of the studies done on the sampling of the STFT of functions are in the context of frame theory. In particular, the sampling of the STFT corresponds to having a discrete set of time-frequency shifts of $\varphi$, and sampling inequalities for the STFT translate to the time-frequency shifts of $\varphi$ forming a so-called Gabor frame. Of particular interest is in finding the conditions on the window function and the point set $\Lambda$ for which the corresponding set of time-frequency shifts of $\varphi$ via $\Lambda$ form a frame. Many results have appeared concerning irregular Gabor frames, or irregular sampling of the STFT, e.g.~\cite{fekost95,rast95,chdehe99,suzh02,cach03,fesu07}, to name a few, and most amount to density conditions on $\Lambda$.

In this paper, we study the \emph{relevant sampling} of the STFT of a function, where we establish a sampling inequality from random sampling points lying only on a compact subset $\Omega$ of $\mathbb{R}^2$, since one faces the problem of finding the appropriate probability distribution for the sampling points if an unbounded and non-compact set like $\mathbb{R}^2$ is considered. The notion of relevant sampling was introduced by Bass and Gr\"{o}chenig \cite{bagr10,bagr13}, where they investigated the probability that from random local sampling points in a compact set, a sampling inequality holds for functions that are band-limited but are essentially supported on the compact set. F\"{u}hr and Xian \cite{fuxi14} extended the results to the more general setting of finitely generated shift-invariant spaces. 

We apply the relevant sampling approach to the STFT of functions that satisfy some locality property in $\Omega$. While our results are mostly analogs of those in \cite{bagr10,bagr13,fuxi14}, the relevant sampling of the STFT provides an interesting supplement to the existing results in the band-limited setting and the setting of finitely generated shift-invariant spaces, and gives some new insights in the study of irregular Gabor frames.

The paper is organized as follows. In the next section, we recall some tools from time-frequency analysis, namely, the short-time Fourier transform and Gabor systems and frames. In Section \ref{sec:TFConcSTFT}, we mention some properties of time-frequency localization operators, and prove some inequalities involving time-frequency localized functions and functions on subspaces of eigenfunctions of time-frequency localization operators. We present the relevant sampling results for the STFT of time-frequency localized functions in Section \ref{sec:RelSampSTFT}. As in \cite{bagr10,bagr13,fuxi14}, we first establish a random sampling inequality for functions in a subspace of eigenfunctions of the time-frequency localization operator, and use this to obtain the sampling inequality for time-frequency localized functions. Finally, in Section \ref{sec:ApproxRec}, we look at an approximate reconstruction of a time-frequency localized function from the local STFT samples and provide numerical examples.

\section{Preliminaries}

In this section we recall some definitions and properties about the short-time Fourier transform and Gabor frames. For a detailed introduction to time-frequency analysis, we refer the reader to \cite{gr01}. 

\subsection{Short-Time Fourier transform}

The \emph{short-time Fourier transform} (STFT) of $f\in \mathbf{L}^2(\mathbb{R})$ with respect to $\varphi$ is given by 
\begin{equation*}
\cV_{\varphi}f(z) = \cV_{\varphi}f(x,\omega) = \int_{\R}f(t)\,\overline{\varphi(t-x)}\,e^{-2\pi i\omega\cdot t}dt=\langle f,\pi(z)\varphi\rangle,
\end{equation*}
where $z=(x,\omega)\in\Rt$ and $\pi(z)$ is the time-frequency shift operator given by $\pi(z)f = f(t-x)\,e^{2\pi i\omega\cdot t}$. The STFT is an isometry from $\mathbf{L}^2(\R)$ to $\mathbf{L}^2(\Rt)$, i.e. $\|\cV_{\varphi}f\|_2 = \|\varphi\|_2\|f\|_2$, and inversion is realized using the formula
 \begin{equation}\label{eq:InvForm}
  f = \cV_{\varphi}^{\ast}\,\cV_{\varphi}f = \iint_{\Rt} \cV_{\varphi}f(z)\pi(z)\varphi\,dz,
 \end{equation}
where the vector-valued integral above and similar expressions in the sequel are understood in a weak sense, cf.~\cite[Section 3.2]{gr01}.
 
The membership of the STFT in $\mathbf{L}^p(\Rt)$ provides a definition of a class of function spaces called \emph{modulation spaces}. For a historical account of their development and role in time-frequency analysis, see \cite{fe06}. Let $\varphi_0$ be the Gaussian, i.e.~$\varphi_0(t) = e^{-\pi t^2}$. The modulation space $\So(\R)$, also known as Feichtinger's algebra, is the space of all  $f\in \mathbf{L}^2(\mathbb{R})$ such that $\|f\|_{\boldsymbol{S}_0}:=\|\cV_{\varphi_0}f\|_{\mathbf{L}^1(\Rt)}<\infty$. It is the smallest Banach space isometrically invariant under time-frequency shifts and the Fourier transform, and is continuously embedded in $\mathbf{L}^2(\R)$ and $\mathbf{L}^1(\R)$, cf.~\cite{fezi98}. Some conditions for membership of $f$ in $\So(\R)$ include $f$ being band-limited and belonging to $\mathbf{L}^1(\R)$, or both $fw_s$ and $\hat{f}w_s$ belonging to $\mathbf{L}^2(\R)$, where $\hat{f}$ is the Fourier transform of $f$ and $w_s(t) = (1+t^2)^{s/2},\,s>1$.

\subsection{Gabor systems and frames}

Given a window function $\varphi\in\mathbf{L}^2(\R)$ and a countable point set $\Lambda\in\mathbb{R}^2$, the \emph{Gabor system} $\mathcal{G}(\varphi,\Lambda)$ is given by $\mathcal{G}(g,\Lambda) = \{\pi(\lambda)\varphi\,:\,\lambda\in\Lambda\}$. We say that $\mathcal{G}(\varphi,\Lambda)$ is a \emph{Gabor frame} if there exist positive constants $A,B>0$ such that for all $f\in\mathbf{L}^2(\R)$ 
\begin{equation*}
 A\|f\|_2^2\leq\sum_{\lambda\in \Lambda}|\langle f,\pi(\lambda)\varphi\rangle|^2\leq B\|f\|_2^2.
\end{equation*}
Since $\cV_{\varphi}f(z) = \langle f,\pi(z)\varphi\rangle$, the above inequalities are also interpreted as sampling inequalities on the STFT. A result that gives an upper inequality was proved in \cite{ba05-1}. Here, we let $Q_h(v) = [v_1-h/2,v_1+1/2]\times[v_2-h/2,v_2+h/2]$, where $h>0,\,v = (v_1,v_2)\in\mathbb{R}^2$

\begin{theorem}{\cite[Theorem 2.2.6]{ba05-1}}\label{thm:BesselBd}
  Let $\varphi\in\So(\mathbb{R})$ and let $\Lambda$ be a relatively separated set in $\Rt$, i.e.~for some (and therefore every) $h>0$, there is $N_h(\Lambda)\in\mathbb{N}$ such that $\sup_{m\in\mathbb{Z}^2}\#(\Lambda\cap Q_h(hm))\leq N_h(\Lambda)$. Then there exists $B>0$ such that for all $f\in\LtR$,
  \begin{equation*}
    \sum_{\lambda\in\Lambda}|\cV_{\varphi}f(\lambda)|^2\leq B\,\|f\|_2^2.
  \end{equation*}
\end{theorem}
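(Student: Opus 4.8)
The plan is to reduce the sum over $\Lambda$ to a sum of local suprema of $\cV_\varphi f$ over the tiling cubes $Q_h(hm)$, and then to control that sum by $\|\cV_\varphi f\|_2^2$ using the reproducing property of the STFT together with a discrete Young inequality. Since at most $N_h(\Lambda)$ points of $\Lambda$ lie in each $Q_h(hm)$ and the family $\{Q_h(hm)\}_{m\in\mathbb{Z}^2}$ tiles $\Rt$, I would first observe that
\[
\sum_{\lambda\in\Lambda}|\cV_\varphi f(\lambda)|^2 \le N_h(\Lambda)\sum_{m\in\mathbb{Z}^2}\ \sup_{z\in Q_h(hm)}|\cV_\varphi f(z)|^2 .
\]
It then suffices to bound the right-hand sum by a constant multiple of $\|\cV_\varphi f\|_2^2=\|\varphi\|_2^2\|f\|_2^2$.

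For the local suprema I would exploit the reproducing kernel of the STFT. Inserting the (normalized) inversion formula \eqref{eq:InvForm} into $\cV_\varphi f(z)=\langle f,\pi(z)\varphi\rangle$ and using $|\langle\pi(w)\varphi,\pi(z)\varphi\rangle|=|\cV_\varphi\varphi(z-w)|$ gives the pointwise estimate
\[
|\cV_\varphi f(z)|\le \frac{1}{\|\varphi\|_2^2}\bigl(|\cV_\varphi f|\ast|\cV_\varphi\varphi|\bigr)(z).
\]
Splitting the convolution into the cubes $Q_h(hn)$ and applying Cauchy--Schwarz on each (of area $h^2$) yields, for $z\in Q_h(hm)$,
\[
|\cV_\varphi f(z)|\le \frac{h}{\|\varphi\|_2^2}\sum_{n\in\mathbb{Z}^2} g_{m-n}\,a_n,\qquad a_n:=\|\cV_\varphi f\|_{\mathbf{L}^2(Q_h(hn))},
\]
where $g_k:=\sup_{u\in Q_{2h}(hk)}|\cV_\varphi\varphi(u)|$ dominates $|\cV_\varphi\varphi(z-w)|$ whenever $w\in Q_h(hn)$, $z\in Q_h(hm)$ and $k=m-n$. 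Taking the supremum over $z\in Q_h(hm)$, squaring, summing in $m$, and invoking Young's inequality $\|g\ast a\|_{\ell^2}\le\|g\|_{\ell^1}\|a\|_{\ell^2}$ together with $\sum_n a_n^2=\|\cV_\varphi f\|_2^2$ produces
\[
\sum_{m\in\mathbb{Z}^2} \sup_{z\in Q_h(hm)}|\cV_\varphi f(z)|^2\le \frac{h^2\|g\|_{\ell^1}^2}{\|\varphi\|_2^4}\,\|\cV_\varphi f\|_2^2 .
\]
Combining this with the first display and $\|\cV_\varphi f\|_2^2=\|\varphi\|_2^2\|f\|_2^2$ yields the assertion with $B=N_h(\Lambda)\,h^2\|g\|_{\ell^1}^2/\|\varphi\|_2^2$.

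The one step that genuinely uses the hypothesis $\varphi\in\So(\R)$ — and the step I expect to be the crux — is the finiteness of $\|g\|_{\ell^1}=\sum_k\sup_{u\in Q_{2h}(hk)}|\cV_\varphi\varphi(u)|$; every other estimate holds for an arbitrary $\LtR$ window. This summability is precisely the statement that $\cV_\varphi\varphi$ lies in the Wiener amalgam space $W(\mathbf{L}^\infty,\mathbf{L}^1)(\Rt)$, and I would deduce it from $\varphi\in\So$: by definition $\cV_{\varphi_0}\varphi\in\mathbf{L}^1(\Rt)$, the change-of-window lemma upgrades this to $\cV_\varphi\varphi\in\mathbf{L}^1(\Rt)$, and the summable-local-sup refinement follows from the standard Wiener-amalgam characterization of Feichtinger's algebra, cf.~\cite{gr01}. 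The remaining points are routine bookkeeping: verifying the magnitude identity $|\langle\pi(w)\varphi,\pi(z)\varphi\rangle|=|\cV_\varphi\varphi(z-w)|$ (the phases arising from $\pi(w)^\ast\pi(z)$ cancel under absolute values, and $|\cV_\varphi\varphi|$ is symmetric), and checking that the side-$2h$ cubes $Q_{2h}(h(m-n))$ indeed contain $z-w$ for $w\in Q_h(hn)$ and $z\in Q_h(hm)$.
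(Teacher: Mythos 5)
Your argument is correct, and it is worth noting that the paper itself does not prove this statement: Theorem \ref{thm:BesselBd} is quoted from \cite{ba05-1}, and the only trace of the intended argument is the remark that follows it, which records the constant $B = N_h(\Lambda)C_{\varphi,\varphi_0}^2\|\cV_{\varphi_0}\varphi_0\|_{\boldsymbol{W}}\|\varphi\|_{\So}^2$. That constant reveals a genuinely different route: the cited proof expands the window atomically, $\varphi=\sum_n b_n\pi(z_n)\varphi_0$ with $\sum_n|b_n|\leq C_{\varphi,\varphi_0}\|\varphi\|_{\So}$, and reduces the Bessel bound to the Gaussian window, whose amalgam norm $\|\cV_{\varphi_0}\varphi_0\|_{\boldsymbol{W}}$ is manifestly finite. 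You instead run the amalgam-space argument directly on $\cV_\varphi\varphi$: the counting step, the reproducing bound $|\cV_\varphi f|\leq\|\varphi\|_2^{-2}\,|\cV_\varphi f|\ast|\cV_\varphi\varphi|$, local Cauchy--Schwarz on the tiling cubes with the containment $z-w\in Q_{2h}(h(m-n))$, and Young's inequality $\ell^1\ast\ell^2\subset\ell^2$ all check out, and you correctly isolate the crux as the finiteness of $\|g\|_{\ell^1}$, i.e.\ $\cV_\varphi\varphi\in W(\mathbf{L}^\infty,\ell^1)$. One caution: your one-line deduction of this from $\cV_{\varphi_0}\varphi\in\mathbf{L}^1(\Rt)$ plus a change of window is slightly too quick, since $\mathbf{L}^1$ membership alone does not control local suprema; one needs a convolution domination such as $|\cV_\varphi\varphi|\lesssim|\cV_{\varphi_0}\varphi|\ast|\cV_{\varphi_0}\varphi_0|\ast|\cV_\varphi\varphi_0|$ together with $\mathbf{L}^1\ast W(\mathbf{L}^\infty,\ell^1)\subset W(\mathbf{L}^\infty,\ell^1)$, so that the Gaussian's self-STFT supplies the local regularity (this is the standard fact in \cite[Ch.~12]{gr01} that you cite, so the gap is one of exposition, not substance). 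At that point the two proofs meet, since both ultimately rest on $\cV_{\varphi_0}\varphi_0\in W(\mathbf{L}^\infty,\ell^1)$. As for what each buys: your constant $B=N_h(\Lambda)h^2\|g\|_{\ell^1}^2/\|\varphi\|_2^2$ is intrinsic to $\varphi$ and self-contained, whereas the cited constant makes the dependence on $N_h(\Lambda)$ and $\|\varphi\|_{\So}$ explicit and separable, which is what the paper exploits when it factors $B=C_\varphi N_0$ ahead of Theorem \ref{thm:RandSampIneq}.
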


\begin{remark}
  From the proof of \cite[Theorem 2.2.6]{ba05-1}, a suitable choice for $B$ is 
  \begin{equation}\label{eq:BesselBd}
    B = N_h(\Lambda)C_{\varphi,\varphi_0}^2\|\cV_{\varphi_0}\varphi_0\|_{\boldsymbol{W}}\|\varphi\|_{\So}^2,
  \end{equation}
  where $C_{\varphi,\varphi_0} = \inf\Big\{C>0\,:\,\sum_{n\in\mathbb{N}}|b_n|\leq C\|\varphi\|_{\So},\,\varphi = \sum_{n\in\mathbb{N}}b_n\pi(z_n)\varphi_0,\,\sum_{n\in\mathbb{N}}|b_n|<\infty,\,\,z_n = (x_n,\omega_n)\in\mathbb{R}^2\Big\}$ and $\|\cV_{\varphi_0}\varphi_0\|_{\boldsymbol{W}} = \sum_{m\in\mathbb{Z}^2}\operatorname{ess\,sup}_{v\in Q_1(0)}|\cV_{\varphi_0}\varphi_0(v + m)|$.
\end{remark}

\section{Time-Frequency Concentration via the STFT}\label{sec:TFConcSTFT}
Time-frequency localization operators as introduced by Daubechies in \cite{da88} are built by restricting the integral in the inversion formula \eqref{eq:InvForm} to a subset of $\Rt$. Its properties, connections with other mathematical topics, and applications have been topics in various works, e.g. \cite{rato94,feno01,defeno02,cogr03,abdo12,doro13,doro14,dove16}.

Let $\Omega$ be a compact set in $\Rt$, $\chi_{\Omega}$ its characteristic (or indicator) function, and $\varphi$ a window function in $\mathbf{L}^2(\R)$, with $\|\varphi\|_2 = 1$. The \emph{time-frequency localization operator} $H_{\Omega,\varphi}$ is defined by
\begin{equation*}
   H_{\Omega,\varphi}f = \iint_{\Omega} \cV_{\varphi}f(z)\pi(z)\varphi\,dz = \cV_{\varphi}^{\ast}\,(\chi_{\Omega}\,\cV_{\varphi}f).
\end{equation*}

The above integral can be interpreted as the portion of the function $f$ that is essentially contained in $\Omega$. Moreover, the following inner product involving $H_{\Omega,\varphi}$ measures the function's energy inside $\Omega$:
\begin{equation}\label{eq:TFLocMeas}
  \langle  H_{\Omega,\varphi}f,f\rangle = \iint_{\Omega}\cV_{\varphi}f(z)\langle \pi(z)\varphi,f\rangle dz = \iint_{\Omega}|\cV_{\varphi}f(z)|^2 dz.
\end{equation}
We will say that a function $f\in \mathbf{L}^2(\R)$ is \emph{$(\varepsilon,\varphi)$-concentrated} inside $\Omega$ if  $\langle  H_{\Omega,\varphi}f,f\rangle\geq (1-\varepsilon)\|f\|_2^2$ or equivalently $\langle (I- H_{\Omega,\varphi})f,f\rangle\leq \varepsilon\|f\|_2^2$, where $I$ is the identity operator.

The time-frequency localization operator $H_{\Omega,\varphi}$ is a compact and self-adjoint operator so we can consider the spectral decomposition 
\begin{equation*}
   H_{\Omega,\varphi}f = \sum_{k = 1}^{\infty}\alpha_k\langle f,\psi_k\rangle \psi_k,
\end{equation*}
where $\{\alpha_k\}_{k=1}^{\infty}$ are the positive eigenvalues, with $\alpha_k\leq 1$ for all $k$, arranged in a non-increasing order and $\{\psi_k\}_{k = 1}^{\infty}$ are the corresponding eigenfunctions.
By the min-max theorem for compact, self-adjoint operators, the first eigenfunction has optimal time-frequency concentration inside $\Omega$ in the sense of \eqref{eq:TFLocMeas}, i.e.
\begin{equation*}
 \iint_{\Omega}|\cV_{\varphi}\psi_1(z)|^2 dz = \max_{\|f\|_2 = 1} \iint_{\Omega}|\cV_{\varphi}f(z)|^2 dz.
\end{equation*}

\bigskip

We denote by $\mathcal{P}_{V_N}$ the orthogonal projection operator to the subspace $V_N$ spanned by the eigenfunctions corresponding to the $N$ largest eigenvalues of $H_{\Omega,\varphi}$. The eigenfunctions $\{\psi_k\}_{k = 1}^{\infty}$ form an orthonormal subset of $\LtR$, possibly incomplete if the kernel $\operatorname{ker}(H_{\Omega,\varphi})$ of $H_{\Omega,\varphi}$ is nontrivial, and we write $f = \sum_{k = 1}^{\infty}\langle f,\psi_k\rangle\psi_k + f_{\operatorname{ker}}$, where $f_{\operatorname{ker}}$ is the orthogonal projection of $f$ onto $\operatorname{ker}(H_{\Omega,\varphi})$. We also have that $\langle H_{\Omega,\varphi}f,f\rangle  = \sum_{k = 1}^{\infty}\alpha_k|\langle f,\psi_k\rangle|^2$. We mention the standard estimate for the distribution of the eigenvalues of $\mathbf{H}_{\Omega,\varphi}$, which appears e.g.~in \cite{la75-1}. The version presented in \cite[Lemma 3.3]{abgrro16} is the following:
\begin{equation}\label{eq:EigEst}
  \Big|\#\{k\,:\,\alpha_k>1-\delta\}-|\Omega|\Big|\leq\max\Big\{\mfrac{1}{\delta},\mfrac{1}{1-\delta}\Big\}\left|\int_{\Omega}\int_{\Omega}|\cV_{\varphi}\varphi(z-z')|^2\,dz\,dz'-|\Omega|\right|.
\end{equation}

In the following lemma, we establish some inequalities involving the projection of a function on $V_N$. These are analogues to the case where the localization operator is via the composition of time- and band-limiting operators \cite{bagr13}, the case of shift-invariant space in \cite{fuxi14}, and the Dunkl setting in \cite{gh16-3}. 

\begin{lemma}\label{lem:TFlocApproxVN}
  Let $N\in\mathbb{N}$ and let $\gamma\in\mathbb{R}$ with $\alpha_N\geq\gamma\geq\alpha_{N+1}$. If $f$ is $(\varepsilon,\varphi)$-concentrated on $\Omega\subset\Rtd$, then 
  \begin{align}
    \|\mathcal{P}_{V_N}f\|_2^2&\geq \Big(1-\lfrac{\varepsilon}{1-\gamma}\Big)\|f\|_2^2, \label{lem:TFlocApproxVN-1}\\
    \Big\|f-\mathcal{P}_{V_N}f\Big\|_2^2&\leq \lfrac{\varepsilon}{1-\gamma}\|f\|_2^2,\,\text{ and } \label{lem:TFlocApproxVN-2}\\
    \langle  H_{\Omega,\varphi}\mathcal{P}_{V_N}f,\mathcal{P}_{V_N}f\rangle&\geq \gamma\Big(1-\lfrac{\varepsilon}{1-\gamma}\Big)\|f\|_2^2. \label{lem:TFlocApproxVN-3}
  \end{align}
\end{lemma}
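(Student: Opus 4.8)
The plan is to diagonalize everything against the orthonormal system $\{\psi_k\}$ and reduce all three inequalities to elementary estimates on the coefficients. Writing $c_k = \langle f,\psi_k\rangle$, we have $f = \sum_{k=1}^{\infty}c_k\psi_k + f_{\operatorname{ker}}$, and by orthonormality $\mathcal{P}_{V_N}f = \sum_{k=1}^{N}c_k\psi_k$, so that $\|\mathcal{P}_{V_N}f\|_2^2 = \sum_{k=1}^{N}|c_k|^2$ and, by the Pythagorean identity for the orthogonal projection, $\|f-\mathcal{P}_{V_N}f\|_2^2 = \sum_{k=N+1}^{\infty}|c_k|^2 + \|f_{\operatorname{ker}}\|_2^2$. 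This last identity already shows that \eqref{lem:TFlocApproxVN-1} and \eqref{lem:TFlocApproxVN-2} are equivalent, so it suffices to prove one of them.

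First I would rewrite the concentration hypothesis in these coordinates. Using $\langle H_{\Omega,\varphi}f,f\rangle = \sum_{k}\alpha_k|c_k|^2$, the assumption $\langle (I-H_{\Omega,\varphi})f,f\rangle\leq\varepsilon\|f\|_2^2$ becomes
\[
  \sum_{k=1}^{\infty}(1-\alpha_k)|c_k|^2 + \|f_{\operatorname{ker}}\|_2^2 \leq \varepsilon\|f\|_2^2,
\]
where every term on the left is nonnegative because $\alpha_k\leq 1$.

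To obtain \eqref{lem:TFlocApproxVN-2}, I would bound the left-hand side below by its tail. For $k\geq N+1$ the ordering $\alpha_k\leq\alpha_{N+1}\leq\gamma$ gives $1-\alpha_k\geq 1-\gamma$, and for the kernel part the effective eigenvalue is $0$, so its coefficient $1$ also exceeds $1-\gamma$. Hence
\[
  (1-\gamma)\Big(\sum_{k=N+1}^{\infty}|c_k|^2 + \|f_{\operatorname{ker}}\|_2^2\Big) \leq \sum_{k=N+1}^{\infty}(1-\alpha_k)|c_k|^2 + \|f_{\operatorname{ker}}\|_2^2 \leq \varepsilon\|f\|_2^2,
\]
and dividing by $1-\gamma$ (which is meaningful precisely when $\gamma<1$) yields \eqref{lem:TFlocApproxVN-2}. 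Then \eqref{lem:TFlocApproxVN-1} follows from $\|\mathcal{P}_{V_N}f\|_2^2 = \|f\|_2^2 - \|f-\mathcal{P}_{V_N}f\|_2^2$.

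Finally, for \eqref{lem:TFlocApproxVN-3} I would compute $\langle H_{\Omega,\varphi}\mathcal{P}_{V_N}f,\mathcal{P}_{V_N}f\rangle = \sum_{k=1}^{N}\alpha_k|c_k|^2$, using that $\mathcal{P}_{V_N}f$ lies in the span of $\psi_1,\dots,\psi_N$ and $H_{\Omega,\varphi}\psi_k = \alpha_k\psi_k$. Here the ordering works in the opposite direction: for $k\leq N$ we have $\alpha_k\geq\alpha_N\geq\gamma$, so this sum is at least $\gamma\sum_{k=1}^{N}|c_k|^2 = \gamma\|\mathcal{P}_{V_N}f\|_2^2$, and inserting \eqref{lem:TFlocApproxVN-1} finishes the estimate. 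The argument is essentially bookkeeping; the only point requiring care is the twofold use of the eigenvalue ordering around the threshold $\gamma$ — bounding $1-\alpha_k$ from below on the tail $k>N$ (and folding the kernel, whose effective eigenvalue is $0$, into that tail) while bounding $\alpha_k$ from below on the head $k\leq N$.
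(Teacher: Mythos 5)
Your proof is correct and rests on the same ingredients as the paper's: the expansion $f=\sum_{k}\langle f,\psi_k\rangle\psi_k+f_{\operatorname{ker}}$, the identity $\langle H_{\Omega,\varphi}f,f\rangle=\sum_{k}\alpha_k|\langle f,\psi_k\rangle|^2$, and the eigenvalue ordering on either side of the threshold $\gamma$. The only difference is organizational — you prove \eqref{lem:TFlocApproxVN-2} directly by bounding $1-\alpha_k\geq 1-\gamma$ on the tail and deduce \eqref{lem:TFlocApproxVN-1} by Pythagoras, whereas the paper proves \eqref{lem:TFlocApproxVN-1} first by contradiction and then deduces \eqref{lem:TFlocApproxVN-2}; these are contrapositive formulations of the same estimate, and your handling of the kernel component and of \eqref{lem:TFlocApproxVN-3} matches the paper's.
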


\begin{proof} 
  Let $f$ be $(\varepsilon,\varphi)$-concentrated on $\Omega$, and without loss of generality, let $\|f\|_2 = 1$. Assume that $\|\mathcal{P}_{V_N}f\|_2^2 = \sum_{k = 1}^N|\langle f,\psi_k\rangle|^2 = K<1 - \lfrac{\varepsilon}{1-\gamma}$.
    Since $\|f\|_2^2 = 1 = \sum_{k = 1}^{\infty}|\langle f,\psi_k\rangle|^2 + \|f_{\operatorname{ker}}\|_2^2$ and $\alpha_1\geq\alpha_2\geq\cdots\geq\alpha_N\geq\gamma$, we have $\sum_{k = N+1}^{\infty}|\langle f,\psi_k\rangle|^2 = 1-K -\|f_{\operatorname{ker}}\|_2^2$ and $\sum_{k = N+1}^{\infty}\alpha_k|\langle f,\psi_k\rangle|^2 \leq \gamma(1-K -\|f_{\operatorname{ker}}\|_2^2).$ Moreover, since $\alpha_k\leq 1$ for each $k$, we obtain
    \begin{align*}
      \sum_{k = 1}^{\infty}\alpha_k|\langle f,\psi_k\rangle|^2 &\leq K + \gamma(1-K -\|f_{\operatorname{ker}}\|_2^2)\\
      &<(1-\gamma)\Big(1-\lfrac{\varepsilon}{1-\gamma}\Big) + \gamma - \gamma\|f_{\operatorname{ker}}\|_2^2\\
      &=1-\varepsilon -\gamma\|f_{\operatorname{ker}}\|_2^2<1-\varepsilon,
    \end{align*}
    where the second line follows from the assumption that $K<1 - \lfrac{\varepsilon}{1-\gamma}$. The resulting inequality above contradicts the hypothesis that $f$ is $(\varepsilon,\varphi)$-concentrated on $\Omega$ since $\langle H_{\Omega,\varphi}f,f\rangle  = \sum_{k = 1}^{\infty}\alpha_k|\langle f,\psi_k\rangle|^2$. Hence, $\|\mathcal{P}_{V_N}f\|_2^2\geq \Big(1-\lfrac{\varepsilon}{1-\gamma}\Big)\|f\|_2^2$.
    
    The inequality in \eqref{lem:TFlocApproxVN-2} follows from \eqref{lem:TFlocApproxVN-1} and the orthogonality of $\mathcal{P}_{V_N}f$ and $f-\mathcal{P}_{V_N}f$. Finally, to prove \eqref{lem:TFlocApproxVN-3}, since each $\psi_k$ is an eigenfunction of $H_{\Omega,\varphi}$ with corresponding eigenvalue $\alpha_k$, we have
	  \begin{equation*}
	   \langle  H_{\Omega,\varphi}\mathcal{P}_{V_N}f,\mathcal{P}_{V_N}f\rangle\geq \sum_{k = 1}^N\alpha_k|\langle f,\psi_k\rangle|^2\geq \gamma\|\mathcal{P}_{V_N}f\|_2^2,
	  \end{equation*}
	  and the conclusion follows from \eqref{lem:TFlocApproxVN-1}.
  \end{proof}

\section{Relevant Sampling of the STFT}\label{sec:RelSampSTFT}

We follow the approach in \cite{bagr13,fuxi14} for functions that are $(\varepsilon,\varphi)$-concentrated on a compact region $\Omega$ in the time-frequency plane, where $\varphi\in \LtR$ with $\|\varphi\|_2=1$. We apply a matrix Bernstein inequality due to Tropp \cite{tr12}. Let $\alpha_{\text{max}}(A)$ be the largest singular value of a matrix $A$ so that $\|A\| = \alpha_{\text{max}}(A^{\ast}A)^{1/2}$ is the operator norm.

  \begin{theorem}\cite[Theorem 1.4]{tr12}\label{thm:Tropp}
    Let $X_j$ be a finite sequence of independent, random, self-adjoint $N\times N$-matrices. Suppose that $\mathbb{E}(X_j) = 0$ and $\|X_j\|\leq B$ a.s. and let $\sigma^2 = \left\|\sum_{j = 1}^r\mathbb{E}(X_j^2)\right\|$. Then for all $t\geq 0$,
    \begin{equation*}
      \mathbb{P}\bigg(\alpha_{\operatorname{max}}\Big(\sum_{j=1}^r X_j \Big)\geq t\bigg)\leq N\exp\left(-\,\lfrac{t^2/2}{\sigma^2+Bt/3}\right).
    \end{equation*}
  \end{theorem}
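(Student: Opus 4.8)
The plan is to prove this by the matrix Laplace transform method, the noncommutative analogue of the Cram\'er--Chernoff technique that underlies scalar concentration. Write $Y=\sum_{j=1}^r X_j$, which is self-adjoint. The starting point is that for every $\theta>0$, Markov's inequality applied to the increasing map $s\mapsto e^{\theta s}$, together with the bound $e^{\theta\,\alpha_{\operatorname{max}}(Y)}=\alpha_{\operatorname{max}}(e^{\theta Y})\leq\operatorname{tr}e^{\theta Y}$ (valid since $e^{\theta Y}$ is positive definite, so its trace dominates its largest eigenvalue), yields
\[
\mathbb{P}\big(\alpha_{\operatorname{max}}(Y)\geq t\big)\leq e^{-\theta t}\,\mathbb{E}\,\operatorname{tr}\,e^{\theta Y}.
\]
Everything then reduces to controlling the trace moment generating function $\mathbb{E}\,\operatorname{tr}\exp(\theta\sum_j X_j)$ and optimizing over $\theta$ at the end.

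The second step has no scalar counterpart and is the crux of the argument: because $e^{A+B}\neq e^A e^B$ for noncommuting matrices, independence does not permit one to factor the trace exponential directly. The tool replacing factorization is Lieb's concavity theorem, which asserts that $A\mapsto\operatorname{tr}\exp(H+\log A)$ is concave on the positive-definite cone for each fixed self-adjoint $H$. Applying Jensen's inequality to the innermost summand, peeling off one $X_j$ at a time and using independence, one obtains the subadditivity of the matrix cumulant generating function,
\[
\mathbb{E}\,\operatorname{tr}\exp\Big(\theta\sum_{j=1}^r X_j\Big)\leq\operatorname{tr}\exp\Big(\sum_{j=1}^r\log\mathbb{E}\,e^{\theta X_j}\Big).
\]
This replaces the expectation of a product-like object by a sum of deterministic matrix cumulants.

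Third, I would bound each summand $\log\mathbb{E}\,e^{\theta X_j}$ using the hypotheses $\mathbb{E}(X_j)=0$ and $\|X_j\|\leq B$. The scalar inequality $e^{\theta x}\leq 1+\theta x+g(\theta)x^2$ for $|x|\leq B$, with $g(\theta)=(e^{\theta B}-1-\theta B)/B^2$, transfers to the semidefinite bound $e^{\theta X_j}\preceq I+\theta X_j+g(\theta)X_j^2$ by the spectral mapping theorem, since $\|X_j\|\leq B$ confines the spectrum of $X_j$ to $[-B,B]$. Taking expectations, the mean-zero hypothesis annihilates the linear term, and the elementary semidefinite bound $I+A\preceq e^A$ gives $\mathbb{E}\,e^{\theta X_j}\preceq\exp\big(g(\theta)\,\mathbb{E}(X_j^2)\big)$, hence $\log\mathbb{E}\,e^{\theta X_j}\preceq g(\theta)\,\mathbb{E}(X_j^2)$. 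Summing, using monotonicity of the trace exponential, and invoking $\big\|\sum_j\mathbb{E}(X_j^2)\big\|=\sigma^2$ together with $\operatorname{tr}\exp(M)\leq N\exp(\alpha_{\operatorname{max}}(M))$, I would reach
\[
\mathbb{P}\big(\alpha_{\operatorname{max}}(Y)\geq t\big)\leq N\exp\big(-\theta t+g(\theta)\,\sigma^2\big).
\]

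Finally, I would optimize the exponent over $\theta$. Rather than optimize $g$ exactly, I would use the elementary bound $g(\theta)\leq\tfrac{\theta^2/2}{1-\theta B/3}$, valid for $0<\theta<3/B$, and substitute the near-optimal choice $\theta=t/(\sigma^2+Bt/3)$, which collapses the exponent to $-\tfrac{t^2/2}{\sigma^2+Bt/3}$ and produces exactly the stated bound. The main obstacle is unquestionably the second step: the passage from $\mathbb{E}\,\operatorname{tr}\exp(\theta\sum_j X_j)$ to $\operatorname{tr}\exp(\sum_j\log\mathbb{E}\,e^{\theta X_j})$ rests on Lieb's concavity theorem, a deep trace inequality whose proof is far from elementary. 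By contrast, the Laplace bound, the per-term spectral estimates, and the final scalar optimization are all comparatively routine once this subadditivity is in hand.
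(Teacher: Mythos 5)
Your outline is correct and is essentially Tropp's own proof of this result (matrix Laplace transform, subadditivity of the cumulant generating function via Lieb's concavity theorem, the Bernstein-type bound on $\log\mathbb{E}\,e^{\theta X_j}$, and the choice $\theta=t/(\sigma^2+Bt/3)$). The paper itself gives no proof --- it imports the theorem verbatim from \cite[Theorem 1.4]{tr12} --- so there is nothing to compare beyond noting that your argument matches the cited source.
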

  
  We take $X_j = T_j-\mathbb{E}(T_j)$ and obtain estimates for $\|X_j\|$, $\mathbb{E}(X_j^2)$, and $\|\sum_{j = 1}^r\mathbb{E}(X_j^2)\|$ in the next lemma.

  \begin{lemma}
    If $X_j = T_j-\mathbb{E}(T_j)$, then
    \begin{enumerate}[1.]
      \item[{\normalfont 1.}] $\|X_j\|\leq 1$,
      \item[{\normalfont 2.}] $\mathbb{E}(X_j^2)\leq \lfrac{1}{|\Omega|}\Delta$, and
      \item[{\normalfont 3.}] $\sigma^2 = \Big\|\displaystyle\sum_{j=1}^r\mathbb{E}(X_j^2)\Big\|\leq \lfrac{r}{|\Omega|}$.
    \end{enumerate}
  \end{lemma}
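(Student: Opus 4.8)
The plan is to pass to the $N$-dimensional coefficient space of $V_N$ and exploit the positive-semidefinite rank-one structure of the sampling matrices together with a single pointwise estimate coming from Bessel's inequality. Writing $f=\sum_{k=1}^N c_k\psi_k\in V_N$ and setting $u(z)=\big(\overline{\cV_{\varphi}\psi_1(z)},\dots,\overline{\cV_{\varphi}\psi_N(z)}\big)^{T}$, each $T_j=u(z_j)u(z_j)^{\ast}$ is the rank-one positive-semidefinite matrix characterized by $c^{\ast}T_j c=|\cV_{\varphi}f(z_j)|^2$, where the $z_j$ are drawn independently and uniformly from $\Omega$. The fact on which everything rests, and which I would establish first, is the pointwise bound
\[
  \|u(z)\|^2=\sum_{k=1}^N|\cV_{\varphi}\psi_k(z)|^2\le 1\qquad(z\in\Rt).
\]
This is immediate from Bessel's inequality for the orthonormal set $\{\psi_k\}_{k=1}^N$, since $|\cV_{\varphi}\psi_k(z)|=|\langle\pi(z)\varphi,\psi_k\rangle|$ and $\sum_{k=1}^N|\langle\pi(z)\varphi,\psi_k\rangle|^2\le\|\pi(z)\varphi\|_2^2=\|\varphi\|_2^2=1$ by the normalization $\|\varphi\|_2=1$. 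I would also record that $\mathbb{E}(T_j)$ is diagonal: its $(k,\ell)$ entry is $\tfrac1{|\Omega|}\iint_{\Omega}\overline{\cV_{\varphi}\psi_k}\,\cV_{\varphi}\psi_\ell\,dz=\tfrac1{|\Omega|}\langle H_{\Omega,\varphi}\psi_\ell,\psi_k\rangle=\tfrac{\alpha_k}{|\Omega|}\delta_{k\ell}$, so that $\mathbb{E}(T_j)=\tfrac1{|\Omega|}\Delta$ with $\Delta=\operatorname{diag}(\alpha_1,\dots,\alpha_N)$ and $\|\Delta\|=\alpha_1\le1$.

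For part 1, the pointwise bound gives $0\le T_j=u(z_j)u(z_j)^{\ast}\le\|u(z_j)\|^2 I\le I$ almost surely, and taking expectations preserves the Loewner order, so $0\le\mathbb{E}(T_j)\le I$ as well. Then $X_j=T_j-\mathbb{E}(T_j)$ is a difference of two self-adjoint matrices each lying in $[0,I]$, whence $-I\le X_j\le I$ and $\|X_j\|\le1$. For part 2, I would use the rank-one identity $T_j^2=\|u(z_j)\|^2\,T_j\le T_j$, again via the pointwise bound, to get $\mathbb{E}(T_j^2)\le\mathbb{E}(T_j)=\tfrac1{|\Omega|}\Delta$; expanding $\mathbb{E}(X_j^2)=\mathbb{E}(T_j^2)-(\mathbb{E}T_j)^2$ and discarding the negative-semidefinite term $-(\mathbb{E}T_j)^2$ then yields $\mathbb{E}(X_j^2)\le\tfrac1{|\Omega|}\Delta$.

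Part 3 follows at once by monotonicity of the operator norm on the positive cone: the $X_j$ are identically distributed, so $\sum_{j=1}^r\mathbb{E}(X_j^2)\le\tfrac{r}{|\Omega|}\Delta$, and taking norms gives $\sigma^2\le\tfrac{r}{|\Omega|}\|\Delta\|=\tfrac{r\alpha_1}{|\Omega|}\le\tfrac{r}{|\Omega|}$. These three quantities are precisely the inputs $B$ and $\sigma^2$ needed to apply Theorem \ref{thm:Tropp}. The only genuinely delicate point is the pointwise Bessel estimate $\sum_{k=1}^N|\cV_{\varphi}\psi_k(z)|^2\le1$; once it is in hand, the three claims are short manipulations in the Loewner order, and the main care needed is to keep the matrix (rather than scalar) inequalities straight and to use that $A\mapsto\|A\|$ is monotone on positive-semidefinite matrices. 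I anticipate no further obstacle.
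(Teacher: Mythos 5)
Your proof is correct and follows essentially the same route as the paper's: the pointwise Bessel bound $\sum_{k=1}^N|\langle\psi_k,\pi(z)\varphi\rangle|^2\le\|\varphi\|_2^2=1$, the rank-one identity $T_j^2\le T_j$, the diagonal form $\mathbb{E}(T_j)=\frac{1}{|\Omega|}\Delta$, and monotonicity of the norm on positive matrices. Your Loewner-order phrasing of part 1 ($0\le T_j\le I$ and $0\le\mathbb{E}(T_j)\le I$ give $-I\le X_j\le I$) is just a cleaner packaging of the paper's direct estimate on the quadratic forms.
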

  \begin{proof}
    \begin{enumerate}[1.]
      \item The matrix norm of $X_j$ is estimated as follows:
	    \begin{align*}
	      \|X_j\| = \|T_j-\mathbb{E}(T_j)\| &= \sup\limits_{\|\mathbf{c}\| = 1}|\langle \mathbf{c},T_j \mathbf{c}\rangle_{\mathbb{C}^N} - \langle \mathbf{c},\mathbb{E}(T_j)\mathbf{c}\rangle_{\mathbb{C}^N}|\\
		&= \sup\limits_{\|f\|_2 = 1}\Big|\, |\mathcal{V}_{\varphi}f(\lambda_j)|^2-\mfrac{1}{|\Omega|}\langle  H_{\Omega,\varphi}f,f \rangle \Big|\\
		&\leq \|f\|_2^2\|\varphi\|_2^2 = 1
	    \end{align*}

      \item To find $\mathbb{E}(X_j^2)$, we use \eqref{eq:ExpTdiag} and obtain
	    \begin{align*}
	      \mathbb{E}(X_j^2) &= \mathbb{E}(T_j^2) - \mfrac{1}{|\Omega|}\mathbb{E}(T_j\Delta) - \mfrac{1}{|\Omega|}\mathbb{E}(\Delta T_j)+ \mfrac{1}{|\Omega|^2}\Delta^2\\
	      &= \mathbb{E}(T_j^2) - \mfrac{1}{|\Omega|}\mathbb{E}(T_j)\Delta - \mfrac{1}{|\Omega|}\Delta\mathbb{E}(T_j)+ \mfrac{1}{|\Omega|^2}\Delta^2\\
	      &= \mathbb{E}(T_j^2) - \mfrac{1}{|\Omega|^2}\Delta^2.
	    \end{align*}
	    Now we compare $T_j^2$ and $T_j$.
	    \begin{align*}
	      (T_j^2)_{km} &= \sum_{l = 1}^N(T_j)_{kl}(T_j)_{lm}\\
	      &= \sum_{k = 1}^N \overline{\langle \psi_k,\pi(\lambda_j)\varphi\rangle}\langle \psi_l,\pi(\lambda_j)\varphi\rangle\overline{\langle \psi_l,\pi(\lambda_j)\varphi\rangle} \langle \psi_m,\pi(\lambda_j)\varphi\rangle\\
	      &= \left(\sum_{l=1}^N|\langle \psi_l,\pi(\lambda_j)\varphi\rangle|^2\right)(T_j)_{km}\\
	      &= \|\mathcal{P}_{V_N}\varphi\|_2^2(T_j)_{km}\leq\|\varphi\|_2^2(T_j)_{km} = (T_j)_{km}
	    \end{align*}
	    We thus have $T_j^2\leq T_j$ and $\mathbb{E}(T_j^2)\leq\mathbb{E}(T_j) = \mfrac{1}{|\Omega|}\Delta$, so the expectation of $X_j^2$ gives
	    \begin{equation*}
		\mathbb{E}(X_j^2) = \mathbb{E}(T_j^2) - \mfrac{1}{|\Omega|^2}\Delta^2\leq \mfrac{1}{|\Omega|}\Delta.
	    \end{equation*}

    \item $\sigma^2 = \Big\|\displaystyle\sum_{j = 1}^r\mathbb{E}(X_j^2)\Big\|\leq 
      \mfrac{r}{|\Omega|}\|\Delta\|\leq \mfrac{r}{|\Omega|}$.
    \end{enumerate}    
  \end{proof}
  
We now provide a random sampling estimate for $V_N$.
  
\begin{theorem}\label{thm:ProbSampSubsp}
  Let $\Lambda_{\Omega}=\{\lambda_j\}_{j\in\mathbb{N}}$ be a sequence of independent  and identically distributed random variables that are uniformly distributed in
  $\Omega$. Then  for all $\nu\geq 0$ and $r\in\mathbb{N}$, we have
      \begin{small}
  \begin{equation*}\label{eq:PropProb}
      \mathbb{P}\left(\inf_{f\in V_N,\|f\|_2 = 1}\lfrac{1}{r}\sum_{j = 1}^r(|\mathcal{V}_{\varphi}f(\lambda_j)|^2-\lfrac{1}{|\Omega|}\langle  H_{\Omega,\varphi}f,f \rangle)\leq -\lfrac{\nu}{|\Omega|}\right)
      \leq  N\exp\left(-\lfrac{\nu^2 r}{|\Omega|(1+\nu/3)}\right).
  \end{equation*}
      \end{small}
\end{theorem}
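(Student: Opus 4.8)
The plan is to recast the function-space infimum as a statement about the smallest eigenvalue of a sum of independent random self-adjoint matrices, and then apply Tropp's inequality (Theorem \ref{thm:Tropp}) to that sum. First I would coordinatize $V_N$ using the orthonormal eigenbasis $\{\psi_k\}_{k=1}^N$: writing $f=\sum_{k=1}^N c_k\psi_k$ with $\mathbf{c}=(c_1,\dots,c_N)\in\mathbb{C}^N$, the isometry $\|f\|_2=\|\mathbf{c}\|$ lets me replace the infimum over $\|f\|_2=1$ by the infimum over unit vectors $\mathbf{c}$. Since $\mathcal{V}_{\varphi}f(\lambda_j)=\sum_{k}c_k\langle\psi_k,\pi(\lambda_j)\varphi\rangle$, the sampled term is the quadratic form $|\mathcal{V}_{\varphi}f(\lambda_j)|^2=\langle\mathbf{c},T_j\mathbf{c}\rangle_{\mathbb{C}^N}$, where $T_j$ is the rank-one positive semidefinite matrix with entries $(T_j)_{km}=\overline{\langle\psi_k,\pi(\lambda_j)\varphi\rangle}\langle\psi_m,\pi(\lambda_j)\varphi\rangle$. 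Because the $\psi_k$ are eigenfunctions of $H_{\Omega,\varphi}$, the companion term is $\tfrac{1}{|\Omega|}\langle H_{\Omega,\varphi}f,f\rangle=\tfrac{1}{|\Omega|}\langle\mathbf{c},\Delta\mathbf{c}\rangle$ with $\Delta=\operatorname{diag}(\alpha_1,\dots,\alpha_N)$, and the uniform distribution of $\lambda_j$ on $\Omega$ gives $\mathbb{E}(T_j)=\tfrac{1}{|\Omega|}\Delta$, so that $X_j=T_j-\mathbb{E}(T_j)$ has mean zero.

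With these identifications the bracketed average collapses to a single quadratic form,
\[
\frac{1}{r}\sum_{j=1}^r\Big(|\mathcal{V}_{\varphi}f(\lambda_j)|^2-\tfrac{1}{|\Omega|}\langle H_{\Omega,\varphi}f,f\rangle\Big)=\Big\langle \mathbf{c},\Big(\tfrac{1}{r}\sum_{j=1}^r X_j\Big)\mathbf{c}\Big\rangle_{\mathbb{C}^N},
\]
and since $\tfrac{1}{r}\sum_{j=1}^r X_j$ is self-adjoint, its infimum over unit vectors is exactly its smallest eigenvalue. Hence the event in the statement is precisely the event that the smallest eigenvalue of $\sum_{j=1}^r X_j$ is at most $-r\nu/|\Omega|$, which is equivalent to $\alpha_{\max}\big(\sum_{j=1}^r(-X_j)\big)\geq r\nu/|\Omega|$. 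This reorientation is the one genuinely delicate point of the argument: Tropp's bound is one-sided, controlling only the top eigenvalue, so to produce a \emph{lower} sampling bound I would apply it to the negated matrices $-X_j$, which are again independent, self-adjoint, and mean-zero. By contrast, the analytic heart of the estimate — the operator-norm and variance bounds — is already supplied by the preceding Lemma, so the remaining work is essentially the variational reformulation and this negation.

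It then remains to feed Theorem \ref{thm:Tropp} the estimates already secured. The matrices $-X_j$ satisfy $\|-X_j\|=\|X_j\|\le 1$, so one may take $B=1$, and since $(-X_j)^2=X_j^2$ the variance proxy is unchanged, $\sigma^2=\big\|\sum_{j=1}^r\mathbb{E}(X_j^2)\big\|\le r/|\Omega|$. Taking $t=r\nu/|\Omega|$ and substituting $B=1$ together with the bound $\sigma^2\le r/|\Omega|$ into the exponent $-\tfrac{t^2/2}{\sigma^2+Bt/3}$, the common factor $r/|\Omega|$ cancels between numerator and denominator, and elementary simplification leaves a Gaussian-type decay of the claimed form in $\nu$, $r$, $|\Omega|$, and $1+\nu/3$. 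This yields the asserted probability estimate for $V_N$.
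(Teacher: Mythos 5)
Your proposal follows essentially the same route as the paper's proof: coordinatizing $V_N$ by the eigenbasis, introducing the rank-one matrices $T_j$ with $\mathbb{E}(T_j)=\tfrac{1}{|\Omega|}\Delta$, identifying the infimum with the smallest eigenvalue of $\tfrac{1}{r}\sum_j(T_j-\mathbb{E}(T_j))$, and invoking Theorem~\ref{thm:Tropp} with $t=r\nu/|\Omega|$ together with the norm and variance bounds from the preceding lemma. Your explicit remark that Tropp's one-sided bound must be applied to the negated matrices $-X_j$ to control the smallest eigenvalue is a point the paper passes over silently, but it is the same argument.
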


\begin{proof}
  Let $f = \sum\limits_{k = 1}^N c_k\psi_k\in V_N$, so that 
  \begin{equation*}
  |\mathcal{V}_{\varphi}f(\lambda_j)|^2 = \sum\limits_{k = 1}^N\sum\limits_{l = 1}^N c_k\overline{c_l} \langle \psi_k,\pi(\lambda_j)\varphi\rangle \overline{\langle \psi_l,\pi(\lambda_j)\varphi\rangle}.  
  \end{equation*}
  We define the $N\times N$ rank-one matrix $T_j$ as follows:
  \begin{equation*}
    (T_j)_{kl} := \langle \psi_k,\pi(\lambda_j)\varphi\rangle \overline{\langle \psi_l,\pi(\lambda_j)\varphi\rangle}.
  \end{equation*}
  Note that $|\mathcal{V}_{\varphi}f(\lambda_j)|^2 = \langle \mathbf{c},T_j \mathbf{c}\rangle_{\mathbb{C}^N}$, where $\mathbf{c}$ is the $N\times 1$ column vector $(c_1\, c_2\, \cdots\, c_N)^{\top}$. Since each random variable $\lambda_j$ is uniformly distributed over $\Omega$, and $\psi_k$ is the $k$th eigenfunction of the time-frequency localization operator $H_{\Omega,\varphi}$, 
  the expectation of the $kl$-th entry is
  \begin{align*}
      \mathbb{E}\left((T_j)_{kl}\right) &= \lfrac{1}{|\Omega|}\iint_{\Omega}\langle \psi_k,\pi(z)\varphi\rangle \overline{\langle \psi_l,\pi(z)\varphi\rangle}\,dz = \lfrac{1}{|\Omega|}\langle  H_{\Omega,\varphi}\psi_k,\psi_l\rangle\\
      &= \lfrac{1}{|\Omega|}\alpha_{k}\delta_{kl}\quad k,l = 1,\ldots,N,
  \end{align*}
  where $\delta_{kl}$ is Kronecker's delta. The expectation of $T_j$ is the diagonal matrix
  \begin{equation}\label{eq:ExpTdiag}
      \mathbb{E}(T_j) = \lfrac{1}{|\Omega|}\operatorname{diag}(\alpha_k)=:\lfrac{1}{|\Omega|}\Delta.
  \end{equation}
  Now, the expression inside the left-hand side of \eqref{eq:PropProb} can be rewritten as
  \begin{align}
      \inf_{f\in V_N,\|f\|_2 = 1}\lfrac{1}{r} & \sum_{j = 1}^r\left(|\mathcal{V}_{\varphi}f(\lambda_j)|^2-\lfrac{1}{|\Omega|}\langle  H_{\Omega,\varphi}f,f \rangle\right)\notag\\
	&=\inf_{\|\mathbf{c}\| = 1}\lfrac{1}{r} \sum_{j = 1}^r(\langle \mathbf{c},T_j \mathbf{c}\rangle_{\mathbb{C}^N} - \langle \mathbf{c},\mathbb{E}(T_j)\mathbf{c}\rangle_{\mathbb{C}^N})\notag\\
	&=\alpha_{\operatorname{min}}\left(\lfrac{1}{r} \sum_{j = 1}^r(T_j-\mathbb{E}(T_j))\right),\label{eq:inftoeigenvalue}
  \end{align}
  where $\alpha_{\text{min}}(U)$ denotes the smallest eigenvalue of a self-adjoint matrix $U$.
  
It follows from Theorem \ref{thm:Tropp}, taking $t = r\nu/|\Omega|$, that
\begin{equation*}
  \mathbb{P}\left(\alpha_{\text{min}}\bigg(\sum_{j = 1}^r(T_j-\mathbb{E}(T_j)\bigg)\leq -\lfrac{\nu r}{|\Omega|}\right)\leq N\exp\left(-\lfrac{\nu^2 r^2|\Omega|^{-2}}{|\Omega|^{-1}r+|\Omega|^{-1}\nu r/3}\right).
\end{equation*}
Together with \eqref{eq:inftoeigenvalue}, we obtain the conclusion of the proposition.
\end{proof}

In the next lemma, we observe a relation between the lower sampling inequality for the space $V_N$ to that for functions that are $(\varepsilon,\varphi)$-concentrated in $\Omega$.

\begin{lemma}\label{lem:LowerBoundSampIneqSubsp}
    Let $N\in\mathbb{N}$ and $\alpha_N\geq\gamma\geq\alpha_{N+1}$. Let $\varphi\in\So(\mathbb{R})$, with $\|\varphi\|_2 = 1$ and $\Lambda_{\Omega}=\{\lambda_r\}_{j = 1}^r$ a finite relatively separated set of points in $\Omega$. If the inequality 
    \begin{equation}\label{eq:SampTFLocSubspLowerBd}
	\mfrac{1}{r}\sum_{j = 1}^r|\cV_{\varphi}p(\lambda_j)|^2\geq \lfrac{\langle  H_{\Omega,\varphi}p,p\rangle -\nu\|p\|_2^2}{|\Omega|},
    \end{equation}
    where $\nu\geq 0$, holds for all $p\in V_N$, then the inequality
    \begin{equation}\label{eq:SampTFLocFuncLowerBd}
	\sum_{j = 1}^r|\cV_{\varphi}f(\lambda_j)|^2\geq A\|f\|_2^2
    \end{equation}
    holds for all $f$ that are $(\varepsilon,\varphi)$-concentrated in $\Omega$ with constant
    \begin{equation*}
	A = \lfrac{r}{|\Omega|}\left(\gamma-\lfrac{\gamma\varepsilon}{1-\gamma}-\nu\right)-2B\sqrt{\lfrac{\varepsilon}{1-\gamma}},
    \end{equation*}
    where $B$ is a constant dependent on the covering index 
    \begin{equation*}
      N_0 = \sup_{m\in\mathbb{Z}^2}\#(\Lambda\cap Q_1(m))
    \end{equation*}
    and the window function $\varphi$.
\end{lemma}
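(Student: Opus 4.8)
The plan is to split $f$ according to the orthogonal decomposition $f = p + q$, where $p = \mathcal{P}_{V_N}f \in V_N$ and $q = f - \mathcal{P}_{V_N}f$, and then control the STFT samples of $f$ through those of $p$ and $q$ separately. Writing $\cV_{\varphi}f(\lambda_j) = \cV_{\varphi}p(\lambda_j) + \cV_{\varphi}q(\lambda_j)$, using the elementary bound $|a+b|^2 \geq |a|^2 - 2|a|\,|b|$, and then applying the Cauchy--Schwarz inequality in the index $j$, I would obtain
\begin{equation*}
\sum_{j=1}^r |\cV_{\varphi}f(\lambda_j)|^2 \geq \sum_{j=1}^r |\cV_{\varphi}p(\lambda_j)|^2 - 2\Big(\sum_{j=1}^r |\cV_{\varphi}p(\lambda_j)|^2\Big)^{1/2}\Big(\sum_{j=1}^r |\cV_{\varphi}q(\lambda_j)|^2\Big)^{1/2}.
\end{equation*}
The first term on the right will supply the main positive contribution, while the cross term will produce the subtracted error appearing in $A$.

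For the main term, since $p \in V_N$ the hypothesis \eqref{eq:SampTFLocSubspLowerBd} applies directly, giving $\sum_j |\cV_{\varphi}p(\lambda_j)|^2 \geq \frac{r}{|\Omega|}\big(\langle H_{\Omega,\varphi}p,p\rangle - \nu\|p\|_2^2\big)$. I would then invoke Lemma \ref{lem:TFlocApproxVN}: inequality \eqref{lem:TFlocApproxVN-3} bounds $\langle H_{\Omega,\varphi}p,p\rangle = \langle H_{\Omega,\varphi}\mathcal{P}_{V_N}f,\mathcal{P}_{V_N}f\rangle$ from below by $\gamma\big(1-\frac{\varepsilon}{1-\gamma}\big)\|f\|_2^2$, while $\|p\|_2^2 = \|\mathcal{P}_{V_N}f\|_2^2 \leq \|f\|_2^2$ controls the $\nu$-term. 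This yields precisely the factor $\frac{r}{|\Omega|}\big(\gamma - \frac{\gamma\varepsilon}{1-\gamma} - \nu\big)\|f\|_2^2$ in the stated constant.

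For the cross term, both sums need to be bounded from above, and here I would apply the Bessel-type bound of Theorem \ref{thm:BesselBd}: since $\Lambda_{\Omega}$ is relatively separated with covering index $N_0$ (taking $h = 1$ so that $N_1(\Lambda) = N_0$), there is a constant $B$, depending on $N_0$ and $\varphi$ through \eqref{eq:BesselBd}, with $\sum_j |\cV_{\varphi}g(\lambda_j)|^2 \leq B\|g\|_2^2$ for every $g \in \LtR$. Applying this to $p$ gives $\sum_j |\cV_{\varphi}p(\lambda_j)|^2 \leq B\|p\|_2^2 \leq B\|f\|_2^2$, and applying it to $q$ together with the approximation estimate \eqref{lem:TFlocApproxVN-2}, namely $\|q\|_2^2 = \|f - \mathcal{P}_{V_N}f\|_2^2 \leq \frac{\varepsilon}{1-\gamma}\|f\|_2^2$, gives $\sum_j |\cV_{\varphi}q(\lambda_j)|^2 \leq B\,\frac{\varepsilon}{1-\gamma}\|f\|_2^2$. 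Multiplying the two square roots bounds the cross term by $2B\sqrt{\frac{\varepsilon}{1-\gamma}}\,\|f\|_2^2$, which is exactly the quantity subtracted in $A$. Combining the two estimates then completes the argument.

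I expect the decisive point to be conceptual rather than computational: the error incurred by replacing $f$ with its projection $p$ must be controlled in two distinct regimes. The deficit in energy concentration is absorbed by the concentration-to-projection estimates of Lemma \ref{lem:TFlocApproxVN}, which produce a lower bound governed by the eigenvalue threshold $\gamma$; the spillover of the samples of $q = f - p$ is instead controlled by the uniform Bessel bound, producing an upper bound governed by the separation constant $B$. The interplay of these two opposite-direction estimates is what generates the characteristic two-term structure of $A$, and the main thing to verify carefully is that the relatively separated hypothesis with covering index $N_0$ makes Theorem \ref{thm:BesselBd} genuinely applicable to both $p$ and $q$.
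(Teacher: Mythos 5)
Your proposal is correct and follows essentially the same route as the paper: the same decomposition $f = \mathcal{P}_{V_N}f + (I-\mathcal{P}_{V_N})f$, the same expansion producing a main term plus a cross term, the Bessel bound of Theorem \ref{thm:BesselBd} applied to both factors of the cross term, and Lemma \ref{lem:TFlocApproxVN} parts \eqref{lem:TFlocApproxVN-2} and \eqref{lem:TFlocApproxVN-3} used exactly as in the paper to arrive at the stated constant $A$.
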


\begin{remark}\noindent\phantom{-}\\ \vspace{-12pt}
  \begin{enumerate}
    \item[(a)] For $A>0$, we need $r\geq |\Omega|\left(\mfrac{2B\sqrt{\frac{\varepsilon}{1-\gamma}}}{\gamma-\frac{\gamma\,\varepsilon}{1-\gamma}-\nu}\right)$.
    
    \item[(b)] We note that if \eqref{eq:SampTFLocSubspLowerBd} holds and $\gamma>\nu$, then $\{\mathcal{P}_{V_N}\pi(\lambda_j)\varphi\}_{j = 1}^r$ is a frame for $V_N$. Indeed, if $p\in V_{V_N}$, then $\cV_{\varphi}p(\lambda_j) = \langle p,\pi(\lambda_j)\varphi\rangle = \langle p,\mathcal{P}_{V_N}\pi(\lambda_j)\varphi\rangle$ and we have
  \begin{equation*}
    \sum_{j = 1}^{r}|\cV_{\varphi}p(\lambda_j)|^2= \sum_{j = 1}^{r}|\langle p,\mathcal{P}_{V_N}\pi(\lambda_j)\varphi\rangle|^2\leq \sum_{j = 1}^{r}\|p\|_2^2\|\varphi\|_2^2 = r\|p\|_2^2.
  \end{equation*}  
  Now, since $\langle H_{\Omega,\varphi}p,p\rangle \geq \alpha_N\|p\|_2^2\geq \gamma\|p\|_2^2$, \eqref{eq:SampTFLocSubspLowerBd} together with the assumption that $\gamma-\nu>0$ gives the lower frame inequality for $\{\mathcal{P}_{V_N}\pi(\lambda_j)\varphi\}_{j = 1}^{r}$.
    \label{rem:FramePropVN}
  \end{enumerate}
\end{remark}

\begin{proof}
  Since $f = \mathcal{P}_{V_N}f + (I-\mathcal{P}_{V_N})f$, we have
  \begin{equation*}
    \left(\sum_{j = 1}^r |\cV_{\varphi}f(\lambda_j)|^2\right)^{1/2}\geq \left(\sum_{j = 1}^r |\cV_{\varphi}\mathcal{P}_{V_N}f(\lambda_j)|^2\right)^{1/2} - \left(\sum_{j = 1}^r |\cV_{\varphi}(I-\mathcal{P}_{V_N})f(\lambda_j)|^2\right)^{1/2}.
  \end{equation*}
  Squaring both sides of the inequality and applying Theorem \ref{thm:BesselBd}, where $B$ is the bound given in \eqref{eq:BesselBd} that is dependent on $N_0$, we get
  \begin{align*}
    \sum_{j = 1}^r |\cV_{\varphi}f(\lambda_j)|^2 &\geq \sum_{j = 1}^r |\cV_{\varphi}\mathcal{P}_{V_N}f(\lambda_j)|^2\\ 
    &\phantom{\text{$\leq$\, }}-2\left(\sum_{j = 1}^r |\cV_{\varphi}\mathcal{P}_{V_N}f(\lambda_j)|^2\right)^{1/2}\left(\sum_{j = 1}^r |\cV_{\varphi}(I-\mathcal{P}_{V_N})f(\lambda_j)|^2\right)^{1/2}\\
    &\phantom{\text{$\leq$\, }}+\sum_{j = 1}^r |\cV_{\varphi}(I-\mathcal{P}_{V_N})f(\lambda_j)|^2\\
    &\geq \sum_{j = 1}^r |\cV_{\varphi}\mathcal{P}_{V_N}f(\lambda_j)|^2-2B\|\mathcal{P}_{V_N}f\|_2\|(I-\mathcal{P}_{V_N})f\|_2\\
    &\geq \sum_{j = 1}^r |\cV_{\varphi}\mathcal{P}_{V_N}f(\lambda_j)|^2-2B\sqrt{\mfrac{\varepsilon}{1-\gamma}}\,\|f\|_2^2,
  \end{align*}
  where the last inequality follows from $\|\mathcal{P}_{V_N}f\|_2\leq\|f\|_2$ and Lemma \ref{lem:TFlocApproxVN}(2). By hypothesis \eqref{eq:SampTFLocSubspLowerBd} and Lemma \ref{lem:TFlocApproxVN}, we obtain
  \begin{align*}
    \sum_{j = 1}^r |\cV_{\varphi}f(\lambda_j)|^2 &\geq \sum_{j = 1}^r |\cV_{\varphi}\mathcal{P}_{V_N}f(\lambda_j)|^2-2B\sqrt{\mfrac{\varepsilon}{1-\gamma}}\,\|f\|_2^2\\
    &\geq \mfrac{r}{|\Omega|}\langle  H_{\Omega,\varphi}\mathcal{P}_{V_N}f,\mathcal{P}_{V_N}f\rangle - \mfrac{r\,\nu}{|\Omega|}\|\mathcal{P}_{V_N}f\|_2^2-2B\sqrt{\mfrac{\varepsilon}{1-\gamma}}\,\|f\|_2^2\\
    &\geq \mfrac{r}{|\Omega|}\gamma\left(1-\mfrac{\varepsilon}{1-\gamma}\right)\|f\|_2^2-\mfrac{r\,\nu}{|\Omega|}\|f\|_2^2-2B\sqrt{\mfrac{\varepsilon}{1-\gamma}}\,\|f\|_2^2.
  \end{align*}
  So we can take $A$ as
  \begin{equation*}
	A = \lfrac{r}{|\Omega|}\left(\gamma-\lfrac{\gamma\varepsilon}{1-\gamma}-\nu\right)-2B\sqrt{\lfrac{\varepsilon}{1-\gamma}}.
  \end{equation*}
\end{proof}

For the succeeding results, let $\Omega$ be a compact set in $\mathbb{R}^2$ that would need at most $|\Omega|+\epsilon_1$ cubes $Q_1(m)$, with $\epsilon_1\geq 0$, to cover it.

\begin{lemma}\label{lem:ProbPtsOmega}
  Let $\Lambda_{\Omega}=\{\lambda_j\}_{j=1}^r$ be a finite sequence of independent and identically distributed random variables that are uniformly distributed in $\Omega$. Let $a>|\Omega|^{-1}$. Then
  \begin{equation*}
    \mathbb{P}(N_0>ar)\leq (|\Omega|+\epsilon_1)\exp\Big(-r\big(a\ln(a|\Omega|)-(a-|\Omega|^{-1}) \big)\Big).
  \end{equation*}
\end{lemma}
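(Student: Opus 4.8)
The plan is to reduce the event $\{N_0>ar\}$ to a union, over the finitely many cubes that can actually contain sample points, of single‑cube overflow events, and then to control each such event by a Chernoff (moment generating function) estimate for a binomial random variable. First I would fix a covering family $M\subseteq\mathbb{Z}^2$ with $\#M\leq|\Omega|+\epsilon_1$ and $\Omega\subseteq\bigcup_{m\in M}Q_1(m)$, which exists by the standing assumption on $\Omega$. Since the lattice cubes $\{Q_1(m)\}_{m\in\mathbb{Z}^2}$ tile $\mathbb{R}^2$ up to a set of measure zero and each $\lambda_j$ is drawn uniformly from $\Omega$ (hence from an absolutely continuous distribution), almost surely every sample point lies in exactly one cube $Q_1(m)$, and that cube satisfies $m\in M$ because $\lambda_j\in\Omega$. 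Writing $X_m:=\#(\Lambda_{\Omega}\cap Q_1(m))$, it follows that $N_0=\max_{m\in M}X_m$ almost surely, so the union bound gives
\[
  \mathbb{P}(N_0>ar)\leq\sum_{m\in M}\mathbb{P}(X_m>ar)\leq(|\Omega|+\epsilon_1)\max_{m\in M}\mathbb{P}(X_m\geq ar).
\]
For a fixed $m$, the indicators $\mathbf{1}_{\{\lambda_j\in Q_1(m)\}}$, $j=1,\dots,r$, are i.i.d.\ Bernoulli variables, so $X_m$ is binomial with parameters $r$ and $p_m=|Q_1(m)\cap\Omega|/|\Omega|\leq|\Omega|^{-1}$, the inequality holding because $Q_1(m)$ has unit area.

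Next I would estimate the single‑cube tail. For any $\theta\geq0$, the exponential Markov inequality together with the bound $1-p_m+p_me^\theta\leq\exp\!\big(p_m(e^\theta-1)\big)$ yields
\[
  \mathbb{P}(X_m\geq ar)\leq e^{-\theta ar}\big(1-p_m+p_me^\theta\big)^r\leq\exp\!\Big(-\theta ar+rp_m(e^\theta-1)\Big)\leq\exp\!\Big(-\theta ar+\tfrac{r}{|\Omega|}(e^\theta-1)\Big),
\]
where the last step replaces $p_m$ by $|\Omega|^{-1}$ using $e^\theta-1\geq0$ (so the bound is monotone in $p_m$ for each fixed $\theta$). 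Because $a>|\Omega|^{-1}$, the choice $\theta=\ln(a|\Omega|)>0$ is admissible and minimizes the exponent; substituting $e^\theta=a|\Omega|$ gives the exponent $-ar\ln(a|\Omega|)+ar-r|\Omega|^{-1}=-r\big(a\ln(a|\Omega|)-(a-|\Omega|^{-1})\big)$. Combining this estimate, which is uniform in $m$, with the union bound above produces the claimed inequality.

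The computation is routine once the reduction is in place; the two points that require care are, first, justifying that only the at most $|\Omega|+\epsilon_1$ covering cubes contribute, so that the union‑bound prefactor is this finite number rather than the infinite index set $\mathbb{Z}^2$ over which the supremum defining $N_0$ is taken, and second, producing the exact exponent. On the latter, a crude subset‑counting estimate $\mathbb{P}(X_m\geq ar)\leq\binom{r}{ar}p_m^{ar}\leq(e/(a|\Omega|))^{ar}$ only gives the weaker exponent $-r\big(a\ln(a|\Omega|)-a\big)$, which drops the favorable $|\Omega|^{-1}$ term; it is the sharper moment generating function bound, which retains the $e^{-rp_m}$ factor implicit in $(1-p_m)^{r-k}$, that yields the stated Poisson‑type (relative‑entropy) exponent. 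This exponent being positive for $a>|\Omega|^{-1}$ is precisely what makes the bound decay in $r$.
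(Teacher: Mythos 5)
Your proposal is correct and follows essentially the same route as the paper: a union bound over the at most $|\Omega|+\epsilon_1$ covering cubes, followed by an exponential Markov (Chernoff) bound on the binomial count in a single cube with success probability at most $|\Omega|^{-1}$, optimized at $\theta=\ln(a|\Omega|)$. Your added care in restricting the union bound to the finite covering family and in justifying the monotonicity in $p_m$ only makes explicit what the paper leaves implicit.
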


\begin{proof}
  If $N_0>ar$, then for at least one $m$, $Q_1(m)$ must contain at least $ar$ points from $\Lambda_{\Omega}$. So we have
  \begin{equation}\label{eq:ProbN0ar}
    \mathbb{P}(N_0>ar)\leq (|\Omega|+\epsilon_1)\sup_{m\in\mathbb{Z}^2}\mathbb{P}(\#(\Lambda_{\Omega}\cap Q_1(m))>ar).
  \end{equation}
  We fix $m\in\mathbb{Z}^2$. For any $b>0$, it follows from Chebyshev's inequality that
  \begin{align*}
    \mathbb{P}(\#(\Lambda_{\Omega}\cap Q_1(m))>ar) &= \mathbb{P}\Big(\sum_{j = 1}^r \chi_{Q_1(m)}(\lambda_j)>ar\Big)\\
    &\leq e^{-bar}\,\mathbb{E}\exp\Big(b\sum_{j = 1}^r \chi_{Q_1(m)}(\lambda_j)\Big).
  \end{align*}
  Since the $\lambda$'s are uniformly distributed over $\Lambda_{\Omega}$, it follows that $\chi_{Q_1(m)}(\lambda_j) = 1$ with probability at most $|\Omega|^{-1}$ and otherwise is $0$. And by the independence,  
  \begin{align*}
    \mathbb{P}(\#(\Lambda_{\Omega}\cap Q_1(m))>ar) &\leq e^{-bar}\,\prod_{j = 1}^r \mathbb{E}\exp(b\chi_{Q_1(m)}(\lambda_j))\\
    &\leq e^{-bar}\,((1-|\Omega|^{-1})+e^b|\Omega|^{-1})^r = e^{-bar}\,(1+(e^b-1)|\Omega|^{-1})^r\\
    &\leq e^{-bar}\,(\exp\big((e^b-1)|\Omega|^{-1}\big))^r.
  \end{align*}
  We choose $b = \ln(a|\Omega|)$ that optimizes the last term, which becomes
  \begin{equation*}
    \exp\Big(-r\big(a\ln(a|\Omega|)-(a-|\Omega|^{-1}) \big)\Big).
  \end{equation*}
  Substituting this expression in \eqref{eq:ProbN0ar} gives the desired result.
\end{proof}

We now combine the result in Theorem \ref{thm:ProbSampSubsp} with the estimates obtained in Lemma \ref{lem:LowerBoundSampIneqSubsp} and Lemma \ref{lem:ProbPtsOmega}, and choose appropriate values of the parameters $\varepsilon$ and $\nu$ to prove the next theorem. We take $\gamma = 1/2$ so that $N$ is around $|\Omega|$, say $N = |\Omega| + \epsilon_{2}$, by \eqref{eq:EigEst}. From the bound $B$ in \eqref{eq:BesselBd}, we take $N_1(\Lambda) = N_0$ and we let $C_{\varphi} = B/N_0$.

\begin{theorem}\label{thm:RandSampIneq}
  Let $\Lambda_{\Omega}=\{\lambda_j\}_{j\in\mathbb{N}}$ be a sequence of identically distributed random variables that are uniformly distributed in $\Omega$, and let $\varphi$ be a window function in $\So(\mathbb{R})$ with $\|\varphi\|_2 = 1$. Suppose
  \begin{equation*}
    \varepsilon<\mfrac{1}{4(1+6\sqrt{2} C_{\varphi})^2}  \qquad\text{ and }\qquad \nu< \mfrac{1}{2}-(1+6\sqrt{2}C_{\varphi})\sqrt{\varepsilon}.
  \end{equation*}
  If we let
  \begin{equation*}
    A = \mfrac{r}{|\Omega|}\Big(\mfrac{1}{2}-\varepsilon-\nu-6\sqrt{2}C_{\varphi}\sqrt{\varepsilon}\Big),
  \end{equation*}
  then the sampling inequality
  \begin{equation}\label{eq:RandSampIneq}
    A\|f\|_2^2\leq \sum_{j=1}^r|\cV_{\varphi}f(\lambda_j)|^2\leq r\|f\|_2^2,
  \end{equation}
  for all $(\varepsilon,\varphi)$-concentrated functions, holds with probability at least
  \begin{equation}\label{eq:ProbSampIneq}
    1-(|\Omega| + \epsilon_{2})\exp\bigg(-\lfrac{\nu^2r}{|\Omega|(1+\nu/3)}\bigg)-(|\Omega|+\epsilon_1)\exp\Big(-\mfrac{r}{|\Omega|}(3\ln3 -2)\Big).
  \end{equation}
\end{theorem}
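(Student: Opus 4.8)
The plan is to assemble the conclusion from the three preceding results by a union bound, after fixing $\gamma = 1/2$ so that, via \eqref{eq:EigEst}, the relevant dimension is $N = |\Omega| + \epsilon_2$. First I would dispose of the upper inequality in \eqref{eq:RandSampIneq}, which is deterministic: since $|\cV_\varphi f(\lambda_j)|^2 = |\langle f, \pi(\lambda_j)\varphi\rangle|^2 \le \|f\|_2^2\|\varphi\|_2^2 = \|f\|_2^2$ by Cauchy--Schwarz and $\|\varphi\|_2 = 1$, summing over the $r$ sample points gives $\sum_{j=1}^r |\cV_\varphi f(\lambda_j)|^2 \le r\|f\|_2^2$ for every $f$. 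Thus only the lower bound carries probabilistic content, and it suffices to control the two ``bad'' events that could spoil it.

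The first bad event is the failure of the subspace sampling inequality \eqref{eq:SampTFLocSubspLowerBd} on $V_N$. After normalizing and rewriting the infimum as a smallest eigenvalue exactly as in \eqref{eq:inftoeigenvalue}, the failure of \eqref{eq:SampTFLocSubspLowerBd} for some $p \in V_N$ is the event bounded in Theorem \ref{thm:ProbSampSubsp}, so it has probability at most $N \exp(-\nu^2 r / (|\Omega|(1+\nu/3)))$; with $N = |\Omega| + \epsilon_2$ this is the first exponential term in \eqref{eq:ProbSampIneq}. On the complement of this event, Lemma \ref{lem:LowerBoundSampIneqSubsp} applies and yields the lower sampling inequality for all $(\varepsilon,\varphi)$-concentrated $f$ with constant $A = \frac{r}{|\Omega|}(\gamma - \frac{\gamma\varepsilon}{1-\gamma} - \nu) - 2B\sqrt{\varepsilon/(1-\gamma)}$; substituting $\gamma = 1/2$ collapses this to $A = \frac{r}{|\Omega|}(\frac12 - \varepsilon - \nu) - 2B\sqrt{2\varepsilon}$.

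The remaining difficulty, and the step I expect to be the main obstacle, is that the Bessel constant $B$ entering $A$ is itself random: by \eqref{eq:BesselBd} and the relabeling $B = N_0 C_\varphi$ it depends on the covering index $N_0$ of the random sample. The key design choice is to invoke Lemma \ref{lem:ProbPtsOmega} with $a = 3/|\Omega|$. This choice does double duty: it makes the exponent $a\ln(a|\Omega|) - (a - |\Omega|^{-1})$ equal to the clean value $(3\ln 3 - 2)/|\Omega|$, producing the second exponential term in \eqref{eq:ProbSampIneq}, and on the good event $\{N_0 \le 3r/|\Omega|\}$ it bounds $B = N_0 C_\varphi \le 3rC_\varphi/|\Omega|$. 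On that event $2B\sqrt{2\varepsilon} \le 6\sqrt{2}\, C_\varphi \sqrt{\varepsilon}\, r/|\Omega|$, so $A \ge \frac{r}{|\Omega|}(\frac12 - \varepsilon - \nu - 6\sqrt{2}\, C_\varphi\sqrt{\varepsilon})$, which is precisely the stated constant.

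Finally I would take the union bound over the two bad events, giving the probability in \eqref{eq:ProbSampIneq}, and check that the hypotheses on $\varepsilon$ and $\nu$ make $A$ positive. The assumption $\nu < \frac12 - (1+6\sqrt{2} C_\varphi)\sqrt{\varepsilon}$ together with $\varepsilon \le \sqrt{\varepsilon}$ forces $\frac12 - \varepsilon - \nu - 6\sqrt{2} C_\varphi\sqrt{\varepsilon} > 0$, while $\varepsilon < 1/(4(1+6\sqrt{2} C_\varphi)^2)$ guarantees that $\frac12 - (1+6\sqrt{2}C_\varphi)\sqrt{\varepsilon}>0$, so the interval of admissible $\nu$ is nonempty. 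No estimate beyond those already proved is needed; the entire content is coordinating the parameter $a$ so that both the probability exponent and the correction term in $A$ emerge in the advertised closed form.
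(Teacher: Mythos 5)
Your proposal is correct and follows essentially the same route as the paper: a union bound over the two bad events (failure of the $V_N$ sampling inequality from Theorem \ref{thm:ProbSampSubsp}, and $N_0 > ar$ from Lemma \ref{lem:ProbPtsOmega} with $a = 3|\Omega|^{-1}$), combined with Lemma \ref{lem:LowerBoundSampIneqSubsp} at $\gamma = 1/2$ and the bound $B = N_0 C_\varphi \le 3rC_\varphi/|\Omega|$ on the good event. Your explicit verification that the hypotheses on $\varepsilon$ and $\nu$ force $A>0$ is in fact slightly more careful than the paper's, which asserts it without computation.
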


\begin{proof} 
  Since $|\cV_{\varphi}f(\lambda_j)| = |\langle f,\pi(\lambda_j)\varphi\rangle|\leq \|f\|_2$, the right-hand side of \eqref{eq:RandSampIneq} follows immediately. We take  $a = 3|\Omega|^{-1}$. Let
  \begin{equation*}
    V_1 = \bigg\{\underset{f\in V_N,\,\|f\|_2 = 1}{\text{inf}}\mfrac{1}{r}\sum_{j = 1}^r \Big(|\mathcal{V}_{\varphi}f(\lambda_j)|^2-\mfrac{1}{|\Omega|}\langle H_{\Omega,\varphi}f,f\rangle\Big)\leq -\lfrac{\nu}{|\Omega|} \bigg\}
  \end{equation*}
  and let
  \begin{equation*}
    V_2 = \{N_0>ar\}.
  \end{equation*}
  It follows from Theorem \ref{thm:ProbSampSubsp} and Lemma \ref{lem:ProbPtsOmega} that the probability of $(V_1\cup V_2)^c$ is bounded below by \eqref{eq:ProbSampIneq}. And by Lemma \ref{lem:LowerBoundSampIneqSubsp}, we have that
  \begin{equation*}
    \sum_{j=1}^r|\cV_{\varphi}f(\lambda_j)|^2 \geq A\|f\|_2^2
  \end{equation*}
  for all $(\varepsilon,\varphi)$-concentrated functions $f$ such that $(V_1\cup V_2)^c$ holds. With $N_0=3|\Omega|^{-1}$, the lower bound in \eqref{eq:SampTFLocFuncLowerBd} becomes $A = \mfrac{r}{|\Omega|}\Big(\mfrac{1}{2}-\varepsilon-\nu-6\sqrt{2}C_{\varphi}\sqrt{\varepsilon}\Big)$. The assumptions on $\varepsilon$ and $\nu$ would guarantee that $A>0$.
\end{proof}
\medskip

\begin{remark}
With $N = |\Omega| + \epsilon_{2}$ and $0<\nu<1/2-(1+6\sqrt{2}C_{\varphi})\sqrt{\varepsilon}$, if $\delta$ is given and 
\begin{equation*}
  r\geq\max\bigg\{|\Omega|\mfrac{1+\nu/3}{\nu^2}\ln\mfrac{2(|\Omega|+\epsilon_2)}{\delta},\,\mfrac{|\Omega|}{3\ln 3-2}\ln\mfrac{2(|\Omega|+\epsilon_1)}{\delta}\bigg\} = |\Omega|\mfrac{1+\nu/3}{\nu^2}\ln\mfrac{2(|\Omega| + \epsilon_{2})}{\delta},
\end{equation*}
then the probability in \eqref{eq:ProbSampIneq} will be larger than $1-\delta$.
\end{remark}

\section{Approximate Reconstruction}\label{sec:ApproxRec}

In this section, we look at the approximate reconstruction of a function $f$ that is $(\varepsilon,\varphi)$-concentrated in $\Omega$. As in the set of band-limited functions that are essentially supported in an interval considered in \cite{bagr13}, the set of $(\varepsilon,\varphi)$-concentrated functions on $\Omega$ 
is not a linear space. Indeed, consider the eigenfunction $\psi_M$ corresponding to the eigenvalue $\alpha_M>1-\varepsilon$. Let $h = \sum_{k\in\mathbb{N}}c_k\psi_k$ such that the sequence $\{c_k\}_{k\in\mathbb{N}}$ satisfies the following conditions:
\begin{equation*}
  0<c_M<\mfrac{1-\varepsilon}{\alpha_M},\,\,\sum_{k\in\mathbb{N}}|c_k|^2 = 1,\,\text{ and }\, \sum_{k\in\mathbb{N}}\alpha_k|c_k|^2 = 1-\eta\varepsilon,\,\,1<\eta<\mfrac{1}{\varepsilon}.
\end{equation*}
It follows that $\|h\|_2 = 1$ and $\langle  H_{\Omega,\varphi}h,h\rangle = 1-\eta\varepsilon<1-\varepsilon$ so that $h$ is not $(\varepsilon,\varphi)$-concentrated on $\Omega$.
Choose $\delta$ such that $0<\delta\leq \mfrac{2c_M(\alpha_M-(1-\varepsilon))}{\varepsilon(\eta-1)}$, and let $f = \psi_M+\delta h$.
We calculate 
\begin{align*}
    \langle  H_{\Omega,\varphi}f,f\rangle &= \langle  H_{\Omega,\varphi}\psi_M,\psi_M\rangle + 2\delta\operatorname{Re}\langle  H_{\Omega_\varphi}\psi_M, h\rangle +\delta^2\langle  H_{\Omega,\varphi}h,h\rangle\\
    &=\alpha_M + 2\delta \alpha_M c_M + \delta^2(1-\eta\varepsilon).
\end{align*}
It follows from the conditions above that the right-hand side of the equation is greater than $(1+2\delta c_M +\delta^2)(1-\varepsilon)$, which in turn is equal to $\|f\|_2^2(1-\varepsilon)$. So $f$ and $\psi_M$ are both $(\varepsilon,\varphi)$-concentrated on $\Omega$, but $f - \psi_M = \delta h$ is not.

Moreover, it is possible to find distinct functions that are $(\varepsilon,\varphi)$-concentrated on $\Omega$ but have the same STFT samples in $\Omega$. Given a set $\Lambda_{\Omega} = \{\lambda_j\}_{j = 1}^r\subset\Omega$, we consider $\Phi = \operatorname{span}\{\pi(\lambda)\varphi\,:\,\lambda\in\Lambda_{\Omega}\}$ and let $\phi$ be an element of the orthogonal complement $\Phi^{\bot}$ of $\Phi$, so that $\cV_{\varphi}\phi(\lambda) = \langle \phi,\pi(\lambda)\varphi\rangle = 0$ for all $\lambda\in\Lambda_{\Omega}$. If we let $f$ be $(\varepsilon,\varphi)$-concentrated on $\Omega$ with $\|f\|_2 = 1$ and $\langle H_{\Omega,\varphi}f,f\rangle>1-\varepsilon$, and we let $\tilde{f} = f + \delta\phi,\,\delta>0$, then we have $\cV_{\varphi}\tilde{f}(\lambda) = \cV_{\varphi}f(\lambda)$ and we can choose $\delta$ small enough so that $\tilde{f}$ is also $(\varepsilon,\varphi)$-concentrated on $\Omega$.
  
Nonetheless, similar to \cite[Lemma 6]{bagr13}, it is possible to approximate $f$ from the local time-frequency samples as shown in the following lemma.
\begin{lemma}
  Let $\{\lambda_k\}_{k = 1}^r$ be a finite subset of $\Omega$. Then the solution to the least square problem
  \begin{equation}
    p_{\operatorname{opt}} = \underset{p\in V_N}{\operatorname{arg\,min}}\left\{\sum_{j = 1}^r|\cV_{\varphi}f(\lambda_j)-\cV_{\varphi}p(\lambda_j)|^2\right\}\label{eq:poptleastsqaure}
  \end{equation}
  satisfies the error estimate
  \begin{equation}\label{eq:ErrorEst}
    \sum_{j = 1}^r|\cV_{\varphi}f(\lambda_j)-\cV_{\varphi}p_{\operatorname{opt}}(\lambda_j)|^2\leq B\lfrac{\varepsilon}{1-\gamma}\|f\|_2^2
  \end{equation}
  for all $f$ that are $(\varepsilon,\varphi)$-concentrated in $\Omega$.
\end{lemma}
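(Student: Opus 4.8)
The plan is to exploit the fact that $p_{\operatorname{opt}}$ minimizes the least-squares objective \eqref{eq:poptleastsqaure} over all of $V_N$, so that evaluating the objective at any fixed competitor can only increase its value. The natural competitor is the orthogonal projection $\mathcal{P}_{V_N}f$, which lies in $V_N$ by definition and is therefore admissible. Hence I would begin by writing
\begin{equation*}
  \sum_{j = 1}^r|\cV_{\varphi}f(\lambda_j)-\cV_{\varphi}p_{\operatorname{opt}}(\lambda_j)|^2 \leq \sum_{j = 1}^r|\cV_{\varphi}f(\lambda_j)-\cV_{\varphi}\mathcal{P}_{V_N}f(\lambda_j)|^2,
\end{equation*}
which is the only place where the optimality of $p_{\operatorname{opt}}$ enters.

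Next I would use the linearity of the STFT in its first argument to rewrite each summand on the right as $|\cV_{\varphi}\big((I-\mathcal{P}_{V_N})f\big)(\lambda_j)|^2$, so that the right-hand side equals $\sum_{j = 1}^r|\cV_{\varphi}\big((I-\mathcal{P}_{V_N})f\big)(\lambda_j)|^2$. Since $\{\lambda_j\}_{j = 1}^r$ is a finite set it is trivially relatively separated, so Theorem \ref{thm:BesselBd} applies with the constant $B$ from \eqref{eq:BesselBd} and bounds this sum by $B\,\|(I-\mathcal{P}_{V_N})f\|_2^2$. Finally I would invoke Lemma \ref{lem:TFlocApproxVN}, specifically \eqref{lem:TFlocApproxVN-2}, which gives $\|(I-\mathcal{P}_{V_N})f\|_2^2 = \|f-\mathcal{P}_{V_N}f\|_2^2 \leq \lfrac{\varepsilon}{1-\gamma}\|f\|_2^2$ for any $f$ that is $(\varepsilon,\varphi)$-concentrated on $\Omega$. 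Chaining the three estimates produces the asserted bound $B\,\lfrac{\varepsilon}{1-\gamma}\|f\|_2^2$.

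I do not expect a genuine obstacle here; the entire content of the argument lies in choosing $\mathcal{P}_{V_N}f$ as the comparison element and in recognizing that the three ingredients—minimality of the least-squares solution, the Bessel-type upper sampling bound, and the projection estimate of Lemma \ref{lem:TFlocApproxVN}—compose directly into the stated inequality. The only points needing a line of justification are that $\mathcal{P}_{V_N}f$ is a legitimate competitor (it is, being the orthogonal projection onto $V_N$) and that Theorem \ref{thm:BesselBd} is applicable to the finite, hence relatively separated, sampling set $\{\lambda_j\}_{j=1}^r$.
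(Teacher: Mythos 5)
Your argument is correct and follows exactly the paper's own proof: compare $p_{\operatorname{opt}}$ against the admissible competitor $\mathcal{P}_{V_N}f$, bound the resulting sum by $B\|(I-\mathcal{P}_{V_N})f\|_2^2$ via Theorem \ref{thm:BesselBd}, and conclude with the projection estimate \eqref{lem:TFlocApproxVN-2}. The paper presents this as a three-line chain of inequalities; you have simply made the intermediate justifications explicit.
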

\begin{proof}
  The result follows from Theorem \ref{thm:BesselBd} and \eqref{lem:TFlocApproxVN-2}:
  \begin{align*}
    \sum_{j = 1}^r|\cV_{\varphi}f(\lambda_j)-\cV_{\varphi}p_{\operatorname{opt}}(\lambda_j)|^2 &\leq \sum_{j = 1}^r|\cV_{\varphi}f(\lambda_j)-\cV_{\varphi}\mathcal{P}_{V_N}f(\lambda_j)|^2\\
    &\leq B\|(I-\mathcal{P}_{V_N})f\|_2^2\\
    &\leq B\lfrac{\varepsilon}{1-\gamma}\|f\|_2^2.
  \end{align*}
\end{proof}

We illustrate the approximate reconstruction from the local time-frequency samples in the finite discrete setting ($\mathbb{C}^L,\,L = 480$). The experiment was done in MATLAB and the code can be downloaded from the following link:\\ 
\url{https://drive.google.com/open?id=0BxekIvg-b--xN0FmeHU2SWs1Rjg}\,. 

In this example, $\Omega$ is a circular region of radius 120 pixels and the window function $\varphi$ is a Gaussian. We consider the time-frequency localization operator $H_{\Omega,\varphi}$ and $N = 94$ eigenfunctions corresponding to the eigenvalues of $H_{\Omega,\varphi}$ greater than $\gamma = 1/2$ that form the subspace $V_N$. We let $f_1$ and $f_2$ be functions that are $(\varepsilon_1,\varphi)$- and $(\varepsilon_2,\varphi)$-concentrated in $\Omega$, with $\varepsilon_1>\varepsilon_2$ so that $f_1$ is less concentrated in $\Omega$ than $f_2$, and consider $r = 300$ distinct sampling points $\Lambda_{\Omega} = \{\lambda_j\}_{j = 1}^r$ on the time-frequency plane. The STFT of $f_1$ and $f_2$ together with the sampling points are shown in Figure \ref{fig:f1f2TFSamples} below.

\begin{figure}[t!hp]
  \includegraphics[width=0.95\linewidth]{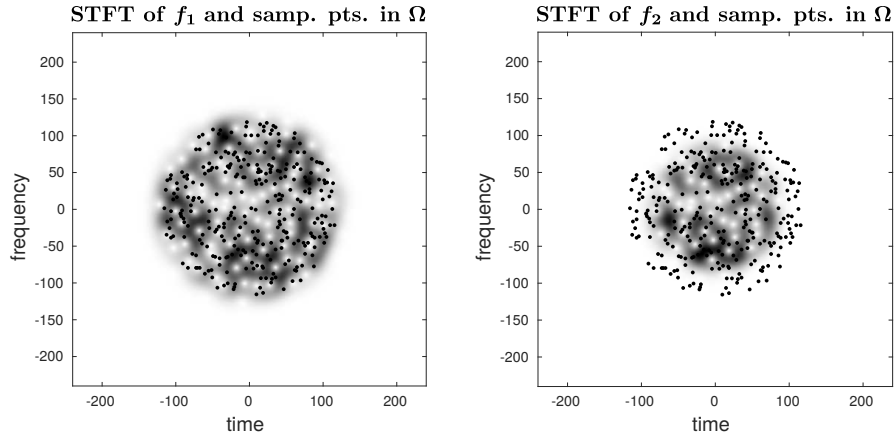}
  \caption{STFT of $f_1$ and $f_2$ and irregular sampling points in $\Omega$}\label{fig:f1f2TFSamples}
\end{figure}

We solve the least square problem in \eqref{eq:poptleastsqaure} via the conjugate gradient method applied to the corresponding normal equations and obtain $p_{\operatorname{opt}}$ in 47 iterations. The relative error 
\begin{equation*}
\frac{\sqrt{\sum_{j = 1}^r|\cV_{\varphi}f(\lambda_j)-\cV_{\varphi}p(\lambda_j)|^2}}{\|f\|_2}
\end{equation*}
for each case is computed and summarized in the table below.

\begin{center}
\begin{tabular}{|c||c|c|c|}
\hline
      & $\varepsilon_i,\,i = 1,2$ & relative error & error bound in \eqref{eq:ErrorEst}\\\hline\hline
$f_1$ & $0.0335$ & $0.13298$ & $0.72491$\\\hline
$f_2$ & $1.8252\times 10^{-9}$ & $2.0717\times 10^{-7}$ & $1.6927\times 10^{-4}$ \vphantom{$10^{0^0}$} \\ \hline
\end{tabular}
\end{center}
The results illustrate \eqref{eq:ErrorEst} in the sense that for functions that are more concentrated in $\Omega$, i.e.~with smaller $\varepsilon$, better approximate solutions can be obtained by solving for $p_{\operatorname{opt}}$ in \eqref{eq:poptleastsqaure}.

Finally, we note that by Remark \ref{rem:FramePropVN}(b), if the frame property is satisfied and $f\in V_N$, then perfect reconstruction can be obtained from the local samples $\{\cV_{\varphi}f(\lambda_j)\}_{j = 1}^r$, i.e.~$p_{\operatorname{opt}} = f$. The reconstruction procedure was applied to a function $f$ in $V_N$ and the tolerance of $10^{-12}$ was attained also in $47$ iterations.

\bibliographystyle{abbrv}

\end{document}